\newtheorem{theorem}{Theorem}
\newtheorem{corollary}[theorem]{Corollary}
\newtheorem{proposition}[theorem]{Proposition}
\newtheorem*{theorem*}{Theorem}
\newtheorem*{main-theorem}{Main Theorem}
\theoremstyle{definition}
\newtheorem{remark}[theorem]{Remark}
\newcommand{\A}{\mathscr{A}}
\newcommand{\C}{\mathbb{C}}
\renewcommand{\P}{\mathbb{P}}
\newcommand{\codim}{\operatorname{codim}}
\renewcommand{\ker}{\operatorname{Ker}}
\begin{document}

\title{Algebraic subellipticity and dominability \\ of blow-ups of affine spaces}

\author{Finnur L\'arusson and Tuyen Trung Truong}

\address{School of Mathematical Sciences, University of Adelaide, Adelaide SA 5005, Australia}
\email{finnur.larusson@adelaide.edu.au}
\email{tuyen.truong@adelaide.edu.au}

\thanks{The authors were supported by Australian Research Council grants DP120104110 and DP150103442.}

\subjclass[2010]{Primary 14R10.  Secondary 14E15, 14M20, 32S45, 32Q99}

\date{1 July 2016.  Minor changes 4 November 2016}

\keywords{Blow-up, affine space, subelliptic, spray, dominable, strongly dominable, Oka manifold.}

\begin{abstract} 
Little is known about the behaviour of the Oka property of a complex manifold with respect to blowing up a submanifold.  A manifold is of Class $\A$ if it is the complement of an algebraic subvariety of codimension at least $2$ in an algebraic manifold that is Zariski-locally isomorphic to $\C^n$.  A manifold of Class $\A$ is algebraically subelliptic and hence Oka, and a manifold of Class $\A$ blown up at finitely many points is of Class $\A$.  Our main result is that a manifold of Class $\A$ blown up along an arbitrary algebraic submanifold (not necessarily connected) is algebraically subelliptic.  For algebraic manifolds in general, we prove that strong algebraic dominability, a weakening of algebraic subellipticity, is preserved by an arbitrary blow-up with a smooth centre.  We use the main result to confirm a prediction of Forster's famous conjecture that every open Riemann surface may be properly holomorphically embedded into $\C^2$.
\end{abstract}

\maketitle

\section{Introduction and Results} 
\label{sec:intro}

\noindent
Modern Oka theory has evolved from Gromov's seminal work on the Oka principle \cite{Gromov1989}.  (The monograph \cite{Forstneric2011} is a comprehensive reference on Oka theory.  See also the surveys \cite{Forstneric2013} and \cite{FL2011}.)  Oka theory may be viewed as the study of approximation and interpolation problems for holomorphic maps from Stein spaces into suitable complex manifolds.  The goal, for suitable targets, is to show that such a problem can be solved as soon as there is no topological obstruction to its solution.  The suitable targets turn out to be the so-called Oka manifolds.  From another point of view, Oka theory is the study of complex manifolds that are the targets of \textit{many} holomorphic maps from Stein spaces, with the word \textit{many} interpreted homotopically.  The fundamental result in this direction is that every continuous map from a Stein space to an Oka manifold can be deformed to a holomorphic map.  From a third point of view, Oka theory is seen as an answer to the question:  What is a good definition of \textit{anti-hyperbolicity} for complex manifolds?  

The prototypical examples of Oka manifolds are complex Lie groups and their homogeneous spaces.  Among other known examples are manifolds of the so-called Class $\A$.  A manifold is of Class $\A$ if it is the complement of an algebraic subvariety of codimension at least $2$ in an algebraic manifold\footnote{An algebraic manifold is a smooth algebraic variety over $\C$, by definition quasi-compact in the Zariski topology.  We take a subvariety to be closed and not necessarily irreducible.} that is Zariski-locally isomorphic to $\C^n$.  (A similar class was introduced in \cite[\S 3.5.D]{Gromov1989}.)  The subclass $\A_0$ of algebraic manifolds Zariski-locally isomorphic to $\C^n$ contains, for example, $\C^n$ itself, complex projective spaces, all Grassmannians, all compact rational surfaces, all smooth complete toric varieties, and any vector bundle over a manifold in $\A_0$.  (Our definitions of $\A_0$ and $\A$ are more general than \cite[Definition 6.4.5]{Forstneric2011}; see Remark \ref{r:quasi-proj-not-needed}.)  For more examples of manifolds of class $\A_0$, see \cite[Section 4]{APS2014} (where the term \textit{A-covered} is used).

A challenging open question in basic Oka theory is whether the Oka property for, say, projective manifolds is a birational invariant.  In other words, how can you say what it means for a complex manifold to be bimeromorphically equivalent to an Oka manifold $Y$ without mentioning $Y$?  We do not know.  Our understanding of the interaction of the Oka property with the operation of blowing up a submanifold, even just a point, is still very limited.  The following result is due to Gromov (\cite[\S 3.5.D'']{Gromov1989}; see also \cite[Proposition 6.4.7]{Forstneric2011} and \cite[Section 4, Statement (9)]{APS2014}).

\begin{theorem*}[Gromov]
A manifold of Class $\A$ blown up at finitely many points is of Class $\A$ and hence Oka.
\end{theorem*}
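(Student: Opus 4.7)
The plan is to reduce the theorem to two claims about a Class $\A$ manifold $X = Y \setminus Z$ (with $Y \in \A_0$ and $Z \subset Y$ an algebraic subvariety of codimension at least $2$) and finitely many centres $p_1,\ldots,p_k \in X$: first, that the blow-up $\pi \colon \tilde Y \to Y$ of $Y$ at these points still belongs to $\A_0$; second, that $\pi^{-1}(Z)$ has codimension at least $2$ in $\tilde Y$. Granting both, the blow-up of $X$ at the $p_i$ is naturally identified with $\pi^{-1}(X) = \tilde Y \setminus \pi^{-1}(Z)$, which then lies in Class $\A$, and hence is Oka by the fact recalled earlier in the introduction.

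The codimension claim is essentially automatic. Because the $p_i$ lie in $X$, they miss $Z$, so $\pi$ is an isomorphism over a Zariski-open neighbourhood of $Z$ and $\pi^{-1}(Z) \to Z$ is an isomorphism. In particular $\codim_{\tilde Y} \pi^{-1}(Z) = \codim_Y Z \geq 2$.

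The substantive step is showing that blowing up a Class $\A_0$ manifold at a finite set of points produces another Class $\A_0$ manifold. I would proceed by induction on the number $k$ of centres. For $k=1$, choose a Zariski-open cover $\{U_\alpha\}$ of $Y$ by charts isomorphic to $\C^n$, with $p \in U_{\alpha_0}$. Away from $p$ the blow-up is unchanged, so for $\alpha \neq \alpha_0$ each $U_\alpha$ lifts to a chart of $\tilde Y$ isomorphic to $\C^n$. It remains to cover $\pi^{-1}(U_{\alpha_0}) \cong \operatorname{Bl}_0 \C^n$ by Zariski-open sets each isomorphic to $\C^n$, and this is supplied by the $n$ standard affine charts of $\operatorname{Bl}_0 \C^n \subset \C^n \times \P^{n-1}$. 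Concatenating these yields a Zariski cover of $\tilde Y$ by copies of $\C^n$, placing $\tilde Y$ in $\A_0$. The inductive step follows by blowing up the points one at a time: after $j$ blow-ups the intermediate manifold is in $\A_0$ by hypothesis, the next centre is disjoint from the exceptional divisors already created and hence is a smooth point lying in some $\C^n$-chart, so the base case applies again.

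I expect no serious obstacle. The whole argument rests on the classical description of $\operatorname{Bl}_0 \C^n$ by its $n$ standard affine charts and on the compatibility of blowing up with open immersions, together with the observation that the centres being blown up lie in the Class $\A_0$ part of the ambient manifold. Everything else is bookkeeping.
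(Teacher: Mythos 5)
The paper offers no proof of this statement, only citations to Gromov and to \cite[Proposition 6.4.7]{Forstneric2011}; your argument is correct and is essentially the standard one from those sources, resting on the two facts you isolate, namely that $\operatorname{Bl}_0\C^n$ is covered by its $n$ standard affine charts, each isomorphic to $\C^n$, and that the centres avoid $Z$ so that $\pi^{-1}(Z)\cong Z$ keeps codimension at least $2$. One cosmetic slip: a chart $U_\alpha$ with $\alpha\neq\alpha_0$ may still contain $p$, in which case $\pi^{-1}(U_\alpha)$ is $\operatorname{Bl}_p\C^n$ rather than $\C^n$, so you should apply the blow-up chart description to \emph{every} chart containing $p$ (not just the chosen $U_{\alpha_0}$) and lift the remaining charts unchanged; with that adjustment the cover works and the rest is as you say.
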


Forstneri\v c proved that $\C^n$ blown up at each point of a tame discrete set is Oka \cite[Proposition 6.4.11]{Forstneric2011}.  It follows that a complex torus of dimension at least $2$, blown up at finitely many points, is Oka \cite[Corollary 6.4.12]{Forstneric2011}.  We are not aware of any other previous results about blow-ups of Oka manifolds being Oka.

Our main result is a strengthening of Gromov's theorem.

\begin{main-theorem}
A manifold of Class $\A$ blown up along any algebraic submanifold (not necessarily connected) is Oka.
\end{main-theorem}

We do not tackle the Oka property directly, but instead verify a geometric sufficient condition for it to hold, called algebraic subellipticity.  (This is how manifolds of Class $\A$ are shown to be Oka.)  An algebraic manifold is algebraically subelliptic if it has a finite dominating family of algebraic sprays \cite[Definition 5.5.11]{Forstneric2011}.  Algebraic subellipticity is a very interesting property for the following reasons.
\begin{itemize}
\item  It is (obviously) a purely algebraic property, but \ldots
\item  \ldots it has massive analytic consequences (namely the Oka property).
\item   It satisfies a localisation principle (due to Gromov \cite[\S 3.5.B]{Gromov1989}; see also \cite[Proposition 6.4.2]{Forstneric2011}), which sometimes offers the only way to the Oka property, for example here and in \cite[Proposition 4.10]{Hanysz2014}.  There is no known holomorphic analogue of this principle.
\item  It implies several algebraic Oka-type properties \cite[Sections 7.8 and 7.10]{Forstneric2011}.  For example, if $X$ is an affine algebraic variety and $Y$ is an algebraically subelliptic manifold, then a holomorphic map $X\to Y$ is approximable by regular maps, uniformly on compact subsets of $X$, if and only if it is homotopic to a regular map.
\end{itemize}

The bulk of this paper is devoted to the proof of the following result.

\begin{theorem}  \label{t:main}
Let $S$ be an algebraic subvariety of $\C^n$, $n\geq 2$, of codimension at least $2$.  The blow-up of $\C^n\setminus S$ along an algebraic submanifold is algebraically subelliptic.
\end{theorem}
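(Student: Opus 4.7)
My strategy is to invoke Gromov's localisation principle for algebraic subellipticity (\cite[Proposition 6.4.2]{Forstneric2011}): it suffices to provide, for each $y\in Y:=\operatorname{Bl}_Z(\C^n\setminus S)$, a Zariski-open neighbourhood $U\ni y$ and finitely many algebraic sprays defined on all of $Y$ whose differentials at the zero section jointly span $T_{y'}Y$ for every $y'\in U$. Write $\pi:Y\to\C^n\setminus S$ for the blow-down and $E=\pi^{-1}(Z)$ for the exceptional divisor. After enlarging $S$ by $\overline{Z}\setminus Z$ (which has codimension at least $2$ in $\C^n$ whenever $Z$ does; the codimension-$1$ case is trivial because the blow-up is an isomorphism), I may assume $Z$ is closed in $\C^n$.

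The workhorse construction is this: a polynomial spray $F:\C^n\times\C^N\to\C^n$ on $\C^n$ satisfying $F(z,0)=z$ and $F(Z\times\C^N)\subseteq Z$ (i.e.\ preserving the centre of the blow-up) lifts uniquely, by the universal property of blow-ups, to a regular algebraic spray $\widetilde F:Y\times\C^N\to Y$. On the open set $Y\setminus E \cong \C^n\setminus(S\cup Z)$, I would take sprays of the form $F(z,t)=z+t\,g(z)\,w$ with $w\in\C^n$ constant and $g$ a polynomial vanishing on $S\cup Z$. Such $F$ fixes $S\cup Z$ pointwise, hence preserves $Z$, and the differential $\partial_t F|_{t=0}=g(z)w$ spans $T_z\C^n$ when $w$ runs through a basis and $g$ through generators of the ideal of $S\cup Z$ not vanishing at a given $z\notin S\cup Z$. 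This covers $Y\setminus E$ by Zariski-open sets with dominating sprays on $Y$ and recovers, by a different route, the Class $\A$ subellipticity of $\C^n\setminus(S\cup Z)$.

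The heart of the proof is to dominate at points of $E$. At a point $y\in E$ over $z\in Z$, the tangent space $T_yY$ decomposes into three parts: the horizontal direction (isomorphic to $T_zZ$), the fibrewise direction (tangent to the $\P^{k-1}$-fibre of $E\to Z$, where $k=\codim_{\C^n} Z$), and the direction transverse to $E$. For the horizontal direction I would build sprays from polynomial vector fields $v$ on $\C^n$ tangent to $Z$; the bare shear $z\mapsto z+tv(z)$ does not preserve $Z$ set-wise, so one must add higher-order polynomial correction terms $z\mapsto z+tv(z)+t^2w(z)+\cdots$ or compose several elementary shears so that the resulting polynomial map preserves $Z$ exactly. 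For the fibrewise and transverse directions, shears $F(z,t)=z+t\,g(z)\,w$ with $g$ chosen to vanish to precisely first order along $Z$ and $w$ transverse to $Z$ produce sprays whose lifts to $Y$ have nonzero derivative along $E$, because dividing $g$ by a local defining function of $E$ yields a nonzero section of a line bundle on $E$, and varying $w$ and $g$ then sweeps out the remaining directions. Assembling a finite family of such sprays that jointly dominate $T_yY$ at every $y\in E$ is the main technical obstacle, and is where I expect the paper to do its most substantial algebraic work---exploiting the codimension-$\geq 2$ extension principle, the shear-and-composition machinery already developed for Class $\A$ manifolds, and a careful choice of polynomial vector fields tangent to $Z$.
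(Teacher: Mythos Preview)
Your overall architecture---localisation, Gromov's shears on $B\setminus E$, and the hard work at $E$---matches the paper, and your sprays for the fibrewise direction are essentially the paper's first construction: a shear $\phi(t,x)=x+th(\tau(x))\zeta$ with $h\circ\tau$ vanishing on $Z$, lifted to the blow-up, does span $\ker d_b\pi=T_b\pi^{-1}(a)$.  (The lift is only rational a priori; the paper has to verify regularity, and obtains it only on a Zariski neighbourhood of $b$, not on all of $Y$.  So your claim of global sprays via the universal property is too strong, though harmless for localisation.)

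The genuine gap is the direction transverse to $E$.  Every spray you propose satisfies $F(Z\times\C^N)\subseteq Z$; hence any lift $\widetilde F$ maps $E\times\C^N$ into $E$, and $\partial_t\widetilde F(y,0)\in T_yE$ for every $y\in E$.  No family of such sprays can ever produce a vector transverse to $E$.  Your heuristic of dividing $g$ by a defining function of $E$ does give a nonzero vector, but it lies in the fibre of $E\to Z$---that is precisely the paper's Type~1 computation---not transverse to $E$.  The horizontal direction is likewise only a hope: integrating a vector field tangent to an arbitrary smooth $Z$ to a polynomial automorphism preserving $Z$ is generally impossible, and the paper does not attempt it.  For both the horizontal and the transverse directions (equivalently, the image of $d_b\pi$) the paper uses a completely different construction that does \emph{not} preserve $Z$: it realises $Z$ locally as a rational retract of a hypersurface $W\subset\C^n$, embeds a chart of the blow-up together with an auxiliary hypersurface built from $W$ as a closed subvariety of some $\C^m$ via a Jelonek-type argument, extends the retraction and $\pi$ coherently to a polynomial map $\phi:\C^m\to\C^n$ with $\phi^*I(Z)$ principal near the image of $b$, and then uses translations along affine lines in $\C^m$, pushed forward by $\phi$, as sprays.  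Because $\phi$ does not send the ambient $\C^m$ into $Z$, these sprays move points off $E$ and supply the missing transverse direction.
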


By localisation of algebraic subellipticity, the following corollary is immediate, and implies our main theorem.

\begin{corollary}  \label{c:locally-affine-subelliptic}
The blow-up of a manifold of class $\mathscr A$ along an algebraic submanifold is algebraically subelliptic.
\end{corollary}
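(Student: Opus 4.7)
The strategy is to reduce directly to Theorem \ref{t:main} via Gromov's localisation principle for algebraic subellipticity (\cite[\S 3.5.B]{Gromov1989}; see also \cite[Proposition 6.4.2]{Forstneric2011}), which asserts that an algebraic manifold admitting a Zariski-open cover by algebraically subelliptic manifolds is itself algebraically subelliptic. The plan is to build such a cover of the blow-up by pulling back the defining chart cover of the underlying Class $\A$ manifold.

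To carry this out, I would first unpack the hypothesis: write the given manifold as $X = M \setminus S$, where $M$ is an algebraic manifold with a Zariski-open cover $\{U_i\}$ such that each $U_i \cong \C^n$, and $S \subset M$ is an algebraic subvariety of codimension at least $2$. Let $C \subset X$ be the algebraic submanifold along which we blow up, and let $\pi \colon \tilde X \to X$ denote the resulting blow-up. Setting $X_i = U_i \cap X = U_i \setminus (S \cap U_i)$ and $V_i = \pi^{-1}(X_i)$, the $V_i$ form a Zariski-open cover of $\tilde X$. Because blow-up is compatible with Zariski-open restriction of the base, each $V_i$ is canonically the blow-up of $X_i$ along the algebraic submanifold $C \cap X_i$. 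Under the isomorphism $U_i \cong \C^n$, $X_i$ corresponds to $\C^n \setminus S_i$ for an algebraic subvariety $S_i \subset \C^n$ of codimension at least $2$, and $C \cap X_i$ corresponds to an algebraic submanifold of $\C^n \setminus S_i$.

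Theorem \ref{t:main} then asserts that each $V_i$ is algebraically subelliptic, and the localisation principle delivers the conclusion for $\tilde X$. There is no real obstacle to this plan once Theorem \ref{t:main} is in hand, which is precisely why the corollary is described as immediate; all the genuine work has been placed in Theorem \ref{t:main}. The only routine verification required is that blow-up commutes with restriction to a Zariski-open subset of the base, a standard functoriality property.
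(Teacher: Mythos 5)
Your proposal is correct and is exactly the paper's argument: the corollary is deduced from Theorem \ref{t:main} by covering the blow-up with the preimages of the defining charts $U_i\setminus S$ (using that blowing up commutes with Zariski-open restriction of the base) and invoking Gromov's localisation principle, which is why the paper calls it immediate. The only point worth noting is that the localisation principle in \cite[Proposition 6.4.2]{Forstneric2011} is stated for quasi-projective manifolds, and the paper's Remark \ref{r:quasi-proj-not-needed} explains why that hypothesis can be dropped here.
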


\begin{remark}  \label{r:quasi-proj-not-needed}
In Forstneri\v c's monograph, the localisation principle for algebraic subellipticity is proved under the assumption that the algebraic manifold $Y$ in question is quasi-projective \cite[Proposition 6.4.2]{Forstneric2011}.  This assumption is only used to ensure that for every point $y\in Y$ and every algebraic subvariety $Z$ of $Y$ with $y\notin Z$, there is an algebraic hypersurface $H$ in $Y$ with $Z\subset H$ but $y\notin H$.  By \cite[Theorem 4.1]{Borelli1963}, every algebraic manifold has this property, so the quasi-projectivity assumption is not needed.
\end{remark}

Next we present two corollaries of the fact that $\C^n$ blown up along an algebraic submanifold is Oka.

The first result confirms a prediction of the conjecture that every open Riemann surface may be properly holomorphically embedded into $\C^2$.  This is the remaining unresolved case of Forster's famous conjecture \cite[p.~183]{Forster1970}.  Let $A$ be an open Riemann surface embedded in $\C^n$ (such an embedding exists for every $n\geq 3$).  If there is an embedding $f:A\to\C^2$, then $f$ extends to a holomorphic map $F:\C^n\to\C^2$, and $F^{-1}(f(A))$ either is, or (if $F^{-1}(f(A))=\C^n$) contains, a hypersurface in $\C^n$ containing $A$ that retracts holomorphically onto $A$.  When $A$ is algebraic, Corollary \ref{c:retract} below confirms that $A$ is indeed a hypersurface retract.

By \cite[proof of Proposition 12 and Remark 13]{FL2014}, if $A$ is a connected analytic submanifold of $\C^n$, every holomorphic vector bundle over $A$ is holomorphically trivial, the blow-up $B$ of $\C^n$ along $A$ is Oka, and every continuous map $A\to B$ is null-homotopic, then $A$ is a holomorphic retract of a smooth analytic hypersurface in $\C^n$.  This result, Theorem \ref{t:main}, and the observation that $B$ is simply connected yield the following corollary.

\begin{corollary}  \label{c:retract}
Let $A$ be a connected algebraic submanifold of $\C^n$.  If $A$ is a curve or $A$ is contractible, then $A$ is a holomorphic retract of a smooth analytic hypersurface in $\C^n$.
\end{corollary}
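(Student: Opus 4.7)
The plan is to apply the criterion of \cite[proof of Proposition 12 and Remark 13]{FL2014} recalled immediately before the corollary: if $A$ is a connected analytic submanifold of $\C^n$ such that (i) every holomorphic vector bundle on $A$ is holomorphically trivial, (ii) the blow-up $B$ of $\C^n$ along $A$ is Oka, and (iii) every continuous map $A\to B$ is null-homotopic, then $A$ is a holomorphic retract of a smooth analytic hypersurface in $\C^n$. First I would dispose of the degenerate case $\codim_{\C^n} A = 1$, when $A$ is itself a smooth hypersurface and retracts onto itself via the identity. So in what follows I assume $\codim A \geq 2$, which is precisely the condition ensuring that the blow-up $B$ is simply connected, as pointed out in the introduction.

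Hypothesis (ii) is immediate from Theorem \ref{t:main} applied with $S=\varnothing$, together with the fact that algebraic subellipticity implies the Oka property.

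For (i), the key observation is that a connected algebraic submanifold of $\C^n$ is closed and affine, hence Stein, so by Grauert's Oka principle the question reduces to topological triviality of complex vector bundles on $A$. If $A$ is contractible, every topological bundle on it is trivial. If $A$ is a curve, then $A$ is a non-compact Stein manifold of complex dimension one and, by the theorem of Andreotti and Frankel, has the homotopy type of a $1$-dimensional CW-complex; since $BU(k)$ is simply connected ($U(k)$ being path-connected), every rank-$k$ complex vector bundle on such a space is topologically trivial.

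For (iii), the contractible case is immediate. For the curve case I would use the same description of the homotopy type of $A$ as a wedge of circles (or a point), together with $\pi_1(B)=0$: based maps from a wedge of circles into a simply connected space are parametrised by $\pi_1$ and hence null-homotopic, and unbased homotopy classes coincide with based ones in a simply connected target. I do not anticipate a serious obstacle: once Theorem \ref{t:main} is available, the corollary is a formal verification of the three hypotheses of the criterion of \cite{FL2014}, using only Grauert, Andreotti--Frankel, and the simple connectivity of the blow-up recorded in the introduction.
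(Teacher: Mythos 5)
Your proposal is correct and follows essentially the same route as the paper: the authors likewise deduce the corollary by verifying the three hypotheses of the criterion from \cite{FL2014}, using Theorem \ref{t:main} for the Oka property of the blow-up and the simple connectivity of $B$ (together with triviality of holomorphic vector bundles on a contractible Stein manifold or an open Riemann surface) for the remaining two conditions. Your explicit treatment of the codimension-one case and the Grauert/Andreotti--Frankel details merely fills in steps the paper leaves implicit.
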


As far as we know, there are contractible affine algebraic manifolds $A$ that are not known to be a hypersurface, for example Ramanujam's surface $R$ and products such as $R\times R$ and $R\times\C^k$.  For such $A$, the corollary is nontrivial.

One of the dozen or more nontrivially equivalent formulations of the Oka property says that a complex manifold $Y$ is Oka if for every Stein manifold $X$ with a subvariety $S$, a holomorphic map $S\to Y$ has a holomorphic extension $X\to Y$ if it has a continuous extension.  The second result follows from Theorem \ref{t:main} and the universal property of the blow-up; the details are given in Section \ref{sec:other-proofs}.

\begin{corollary}  \label{c:many-maps}
Let $A$ be an algebraic submanifold of $\C^n$, $n\geq 2$, $A\neq \C^n$, and let $T$ be a discrete subset of $\C^m$, $m\geq 1$, or a smooth analytic curve in $\C^m$, $m\geq 2$.  Let $f:T\to\C^n$ be holomorphic (an arbitrary map if $T$ is discrete).  Then $f$ extends to a holomorphic map $F:\C^m\to\C^n$ such that $F^{-1}(A)$ is a hypersurface.
\end{corollary}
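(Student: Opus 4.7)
The plan is to exploit the universal property of the blow-up together with Theorem \ref{t:main}. Let $\pi: B \to \C^n$ denote the blow-up of $\C^n$ along $A$, and let $E = \pi^{-1}(A)$ be the exceptional divisor, a smooth hypersurface in $B$. By Theorem \ref{t:main}, $B$ is algebraically subelliptic, hence Oka. The key reformulation is that if $\tilde F: \C^m \to B$ is any holomorphic map with $\tilde F(\C^m) \not\subset E$, then $F := \pi \circ \tilde F$ satisfies $F^{-1}(A) = \tilde F^{-1}(E)$, which is a hypersurface in $\C^m$ as the proper preimage of a smooth hypersurface under a holomorphic map. The task therefore reduces to producing such a $\tilde F$ whose composition with $\pi$ extends $f$. (When $A$ is itself a hypersurface in $\C^n$, the blow-up is an isomorphism, and the argument trivially reduces to arranging $F(\C^m) \not\subset A$ for any Cartan extension $F$; we therefore focus on $\codim A \geq 2$.)

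First, lift $f: T \to \C^n$ to a holomorphic map $\tilde f: T \to B$. For $T$ discrete, any pointwise choice in the nonempty fibre $\pi^{-1}(f(t))$ yields a holomorphic lift. For $T$ a smooth analytic curve with $f(T) \not\subset A$, the preimage $f^{-1}(A) \subset T$ is discrete and hence a Cartier divisor, so the universal property of the blow-up yields a unique holomorphic lift $\tilde f$, namely the strict transform. In the remaining sub-case where $T$ is a curve with $f(T) \subset A$, the pulled-back projective bundle $f^*E \to T$ admits a holomorphic section, since $T$ is an open Riemann surface on which every holomorphic vector bundle is trivial; this gives a lift $\tilde f: T \to E \subset B$.

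Second, extend $\tilde f$ to a holomorphic map $\tilde F: \C^m \to B$. Since $T$ is a closed analytic subvariety of the Stein manifold $\C^m$ and $B$ is Oka, the interpolation form of the Oka property produces such an extension provided $\tilde f$ extends continuously. This is trivial when $T$ is discrete. When $T$ is a curve, it follows from simple-connectedness of $B$: applying Van Kampen to $B = (B \setminus E) \cup U$ with $U$ a tubular neighbourhood of $E$, we have $\pi_1(B \setminus E) = \pi_1(\C^n \setminus A) = 1$ because $\codim A \geq 2$, while $\pi_1(U) = \pi_1(E) = \pi_1(A)$ and the link of $E$ (a circle bundle over $E$) has fundamental group surjecting onto $\pi_1(A)$, so the amalgamation collapses to the trivial group. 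A smooth analytic curve is homotopy equivalent to a one-dimensional CW complex, so every continuous map from $T$ into the simply-connected manifold $B$ is null-homotopic, and hence extends continuously to $\C^m$.

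Third, arrange that $\tilde F(\C^m) \not\subset E$. This is automatic whenever $\tilde f(T) \not\subset E$, covering every case except $f(T) \subset A$. In that remaining sub-case, enlarge $T$ to $T \cup \{x_0\}$ for some $x_0 \in \C^m \setminus T$ and extend $\tilde f$ by setting $\tilde f(x_0)$ to be any point of $B \setminus E$ before applying the Oka extension; the continuous extendability hypothesis is preserved. The principal technical nuance, and the main obstacle of the proof, is this final sub-case $f(T) \subset A$, which requires both a non-canonical construction of the lift $\tilde f$ and an extra interpolation to guarantee the extension escapes $E$; both are handled routinely using the flexibility afforded by the Oka property of $B$.
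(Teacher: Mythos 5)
Your proposal follows essentially the same route as the paper: lift $f$ to $g\colon T\to B$ (universal property when $f(T)\not\subset A$, triviality of the pulled-back normal bundle when $f(T)\subset A$), extend continuously using simple connectedness of $B$, apply the Oka property of $B$ (Theorem \ref{t:main}) to get a holomorphic extension into $B$, and push down by $\pi$. All of these steps are correct, including the Van Kampen computation of $\pi_1(B)$ and the trick of adjoining an extra point of $T$ mapped off $A$ to prevent $\tilde F(\C^m)\subset E$.

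There is, however, one degenerate case you do not exclude: $F^{-1}(A)=\tilde F^{-1}(E)$ could be \emph{empty}, which is not a hypersurface. Your reduction asserts that $\tilde F(\C^m)\not\subset E$ forces $\tilde F^{-1}(E)$ to be a hypersurface, but the preimage of a hypersurface under a map whose image meets its complement is either a hypersurface or empty, and emptiness genuinely occurs (e.g.\ when $f(T)\cap A=\varnothing$ and the Oka extension happens to avoid $E$). The fix is the mirror image of the trick you already use: adjoin a further point or component to $T$ and send it \emph{into} $A$ (lifting it to a point of $E$), so that $\tilde F^{-1}(E)\neq\varnothing$. This is exactly how the paper averts both degenerations; with that one line added, your argument coincides with the published proof.
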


We interpret the corollary to mean that there are \textit{many} holomorphic maps $\C^m\to\C^n$ that pull $A$ back to a hypersurface.

We now turn to a weaker, simpler property for which we can obtain stronger results.  An algebraic manifold $X$ is said to be algebraically dominable at a point $x$ in $X$ if there is a regular map $f:\C^n\to X$ such that $f(0)=x$ and $f$ is a local isomorphism at $0$.  We say that $X$ is algebraically dominable if it is algebraically dominable at some point, and that $X$ is strongly algebraically dominable if it is dominable at every point.

We use the technology of composed sprays and the Quillen-Suslin theorem to prove the following result.

\begin{proposition}  \label{p:quillen-suslin}
An algebraically subelliptic manifold is strongly algebraically dominable.
\end{proposition}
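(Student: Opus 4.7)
\smallskip
\noindent\emph{Proof plan.}  The plan is to start from a finite dominating family of algebraic sprays $(E_i,p_i,s_i)$ on $X$, $i=1,\dots,k$, form the composed spray, restrict it to a single fibre over a chosen point, invoke the Quillen--Suslin theorem to identify this fibre with an affine space, and then cut down by a linear subspace to match dimensions.

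First I would recall the composed spray of Gromov.  Iteratively one builds an algebraic variety $E$ together with a regular map $s\colon E\to X$ and a regular section $\zeta\colon X\to E$ of a natural projection $p\colon E\to X$, such that $s\circ\zeta=\mathrm{id}_X$ and, at every point $x\in X$, the differential of $s$ at $\zeta(x)$ is surjective onto $T_xX$.  Concretely, for two sprays one sets
\[
 E=E_1\times_X E_2=\{(e_1,e_2)\in E_1\times E_2 : s_1(e_1)=p_2(e_2)\},\qquad s(e_1,e_2)=s_2(e_2),
\]
with $\zeta(x)=(0_x,0_x)$ and $p(e_1,e_2)=p_1(e_1)$; the analogous construction works for $k$ factors.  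A direct tangent-space computation along the three coordinate directions at $\zeta(x)$ shows that the image of $ds_{\zeta(x)}$ contains $T_xX$ together with the cores of the individual sprays at $x$, and is therefore all of $T_xX$ by the dominating property of the family.

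Next, fix $x\in X$ and restrict $s$ to the fibre $F=p^{-1}(x)$.  Since $dp$ is surjective at $\zeta(x)$ in the "zero-section" direction, the restriction $s|_F\colon F\to X$ still has surjective differential at $\zeta(x)$, for precisely the same reason: the contribution to $ds_{\zeta(x)}$ from each factor $E_i|_{(\,\cdot\,)}$ is vertical for $p$ and equals the core of $s_i$ at $x$.  The fibre $F$ is built as a tower of algebraic vector bundles: the first stage is $E_1|_x\cong \C^{r_1}$; the second stage is the restriction to this $\C^{r_1}$ of the pullback $(s_1)^*E_2$, which is an algebraic vector bundle of rank $r_2$ over $\C^{r_1}$; and so on.  Here is where the Quillen--Suslin theorem enters: every algebraic vector bundle over affine space is trivial, so an easy induction on the number of stages identifies $F$ with $\C^N$, $N=r_1+\cdots+r_k$, as an algebraic variety.

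Finally, transport the base-point $\zeta(x)$ to $0\in\C^N$ by an affine translation, obtaining a regular map $\phi\colon\C^N\to X$ with $\phi(0)=x$ and $(d\phi)_0$ surjective.  Choose any linear subspace $L\subset\C^N$ of dimension $n=\dim X$ such that $(d\phi)_0|_L\colon L\to T_xX$ is a linear isomorphism; restricting $\phi$ to $L\cong\C^n$ produces the required regular map $\C^n\to X$ that is a local isomorphism at $0$ and sends $0$ to $x$.  Since $x$ was arbitrary, $X$ is strongly algebraically dominable.  The main conceptual step is the second one: recognising that the composed-spray fibre is the total space of a tower of algebraic vector bundles over affine space, so that Quillen--Suslin trivialises it algebraically.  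Everything else is bookkeeping.
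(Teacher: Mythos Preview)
Your argument is correct and follows essentially the same route as the paper: form Gromov's composed spray, observe that each fibre over a point of $X$ is an iterated algebraic vector bundle over affine space, and apply the Quillen--Suslin theorem to identify that fibre with $\C^N$, whence dominability at the chosen point.  You have spelled out in more detail than the paper the verification that $s|_{p^{-1}(x)}$ is a submersion at the zero point and the final reduction from $\C^N$ to $\C^n$ by restricting to a generic linear subspace, but these are exactly the steps the paper leaves implicit in its last sentence.
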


The next corollary is then immediate.

\begin{corollary}  \label{c:locally-affine-dom}
The blow-up of a manifold of class $\mathscr A$ along an algebraic submanifold is strongly algebraically dominable.
\end{corollary}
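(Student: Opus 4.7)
The plan is simply to chain together two results already in hand. Corollary \ref{c:locally-affine-subelliptic} asserts that the blow-up $\widetilde{X}$ of any class $\A$ manifold $X$ along an algebraic submanifold is algebraically subelliptic. Proposition \ref{p:quillen-suslin} asserts that any algebraically subelliptic manifold is strongly algebraically dominable. Composing these two statements gives exactly the conclusion of Corollary \ref{c:locally-affine-dom}, so there is nothing further to verify.

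In other words, the entire geometric content of the corollary has been displaced into the two prior results. The truly nontrivial input is Corollary \ref{c:locally-affine-subelliptic}, which rests on Theorem \ref{t:main}, while the upgrade from subellipticity at a single point to dominability at every point is the role of Proposition \ref{p:quillen-suslin}, accomplished by the composed spray construction together with the Quillen--Suslin theorem (used to trivialise the spray bundles so that composed sprays produce a regular map from some $\C^N$ that is a local isomorphism at a prescribed point). The main obstacle, such as it is, is therefore not in this corollary at all but in the proofs of Theorem \ref{t:main} and Proposition \ref{p:quillen-suslin}; given those, the deduction here is a one-line composition.
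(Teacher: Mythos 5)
Your proof is correct and is exactly the paper's own deduction: the authors state the corollary as an immediate consequence of Corollary \ref{c:locally-affine-subelliptic} combined with Proposition \ref{p:quillen-suslin}. Nothing further is needed.
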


Note that if a projective manifold is algebraically dominable, then it is unirational and hence rationally connected.  We do not know any examples of algebraic manifolds that are dominable but not algebraically subelliptic, but it seems unlikely that the two properties are equivalent.  Strong dominability is not known to imply the Oka property.

Using Theorem \ref{t:main} and Proposition \ref{p:quillen-suslin}, we establish the following result.

\begin{proposition}  \label{p:stronger-dom}
The blow-up of $\C^n$, $n\geq 2$, along a closed subscheme $A$ is algebraically dominable at every point over the complement of the singular locus of $A$.
\end{proposition}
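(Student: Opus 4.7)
The plan is to deduce the proposition from Corollary~\ref{c:locally-affine-dom} applied to a suitable open subset of $\C^n$. Write $\pi\colon B\to\C^n$ for the blow-up and set $U=\C^n\setminus\operatorname{Sing}(A)$; a point $x\in\pi^{-1}(U)$ is what we need to dominate in $B$. The naive move is to apply Corollary~\ref{c:locally-affine-dom} directly to the pair $(U,A\cap U)$, since $A\cap U$ is an algebraic submanifold of $U$ by the very definition of $\operatorname{Sing}(A)$. The obstruction is that the corollary needs the ambient manifold to be of class $\A$, which forces $\operatorname{Sing}(A)$ to have codimension at least $2$ in $\C^n$; this can fail when $A$ is non-reduced along a hypersurface.

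To get around this, I will replace $I_A$ by an ideal defining the same blow-up but whose vanishing locus has codimension at least $2$. In the UFD $\C[x_1,\dots,x_n]$, choose generators $f_1,\dots,f_k$ of $I_A$, let $h=\gcd(f_1,\dots,f_k)$, and factor $I_A=(h)\cdot J$ with $J=(f_1/h,\dots,f_k/h)$. By construction the generators of $J$ have trivial GCD, so $V(J)\subseteq\C^n$ has codimension at least $2$, and because $(h)$ is an invertible ideal the universal property of the blow-up yields $\operatorname{Bl}_{V(J)}\C^n=B$. The main technical step is the containment $\operatorname{Sing}(V(J))\subseteq\operatorname{Sing}(A)$, which I would verify by a local analysis at $p\notin\operatorname{Sing}(A)$: (i) if $p\notin A$ as a set, the general inclusion $V(J)\subseteq A$ of supports gives $p\notin V(J)$; (ii) if $A$ is smooth of codimension at least $2$ at $p$, then $h(p)\neq 0$, since otherwise $V(h)$ would be a hypersurface through $p$ contained in the support of $A$, impossible by dimension; so $(h)$ is locally trivial and $V(J)=A$ is smooth at $p$; (iii) if $A$ is smooth of codimension $1$ at $p$, a short computation in the regular local ring at $p$ shows that either $h$ is a unit there (whence $V(J)=A$ locally) or $(h)=I_A$ locally (whence $V(J)$ is empty near $p$).

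With this containment in hand, set $U'=\C^n\setminus\operatorname{Sing}(V(J))$. Then $U\subseteq U'$, and $U'$ is of class $\A$ because its complement has codimension at least $2$ in $\C^n$. The restriction $V(J)\cap U'$ is a smooth algebraic submanifold of $U'$ (possibly disconnected and with components of differing dimensions), so Corollary~\ref{c:locally-affine-dom} applies and gives strong algebraic dominability of $\operatorname{Bl}_{V(J)\cap U'}U'=\pi^{-1}(U')$. For the chosen $x\in\pi^{-1}(U)\subseteq\pi^{-1}(U')$, composing a dominating map with the open immersion $\pi^{-1}(U')\hookrightarrow B$ yields the required regular map $\C^n\to B$. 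The codimension-one behaviour of $\operatorname{Sing}(A)$ is the sole real obstacle; everything else is a routine unpacking of definitions.
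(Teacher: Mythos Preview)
Your argument is correct and follows the same route as the paper: factor out the GCD $h$ of generators of $I_A$ to pass to a subscheme $V(J)$ of codimension at least $2$ defining the same blow-up, then apply Theorem~\ref{t:main} (equivalently Corollary~\ref{c:locally-affine-dom}) together with Proposition~\ref{p:quillen-suslin} on the complement of the singular locus. The only difference is one of care: after observing that the blow-ups coincide, the paper simply writes ``thus we may assume that $A$ has codimension at least $2$'' and thereafter works with the singular locus of the new scheme, whereas your case analysis (i)--(iii) verifying $\operatorname{Sing}(V(J))\subseteq\operatorname{Sing}(A)$ makes explicit precisely the containment needed for that reduction to respect the original singular locus in the statement.
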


A closed subscheme of $\C^n$ is nothing but an ideal in the coordinate ring $\C[x_1,\ldots,x_n]$.

Finally, we are able to show that algebraic dominability is preserved by an arbitrary blow-up with a smooth centre.  The analogous result for algebraic subellipticity is beyond our reach for now.

\begin{theorem}  \label{t:general-strong-dom}
Let $B$ be the blow-up of an algebraic manifold $X$ along an algebraic submanifold.  If $X$ is algebraically dominable at a point $x$, then $B$ is algebraically dominable at every point over $x$.  Hence, if $X$ is algebraically dominable, so is $B$, and if $X$ is strongly algebraically dominable, so is $B$.
\end{theorem}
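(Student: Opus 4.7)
My plan is to reduce to Proposition \ref{p:stronger-dom} by pulling the blow-up back along a dominating map from $\C^n$ and composing. Write $\pi\colon B\to X$ for the blow-up with smooth centre $A$. If the given point $x$ does not lie on $A$, then $\pi$ is an isomorphism near the unique point over $x$, so lifting through $\pi$ any regular map $\C^n\to X$ that dominates $X$ at $x$ handles this case trivially, and we may assume $x\in A$.

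Set $n=\dim X$ and choose a regular map $f\colon\C^n\to X$ with $f(0)=x$ that is a local isomorphism at $0$. The scheme-theoretic preimage $A'=f^{-1}(A)$ is a closed subscheme of $\C^n$, and since $f$ is a local isomorphism at $0$ and $A$ is smooth at $x$, $A'$ is smooth at $0$; in particular $0$ lies outside the singular locus of $A'$. Let $\pi'\colon B'\to\C^n$ be the blow-up along $A'$, and write $E=(\pi')^{-1}(0)$ and $E_B=\pi^{-1}(x)$ for the two exceptional fibres. By the universal property of the blow-up, $f$ lifts to a unique regular map $\tilde f\colon B'\to B$ satisfying $\pi\circ\tilde f=f\circ\pi'$. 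Because the formation of blow-ups commutes with \'etale base change, $\tilde f$ will be a local isomorphism at every point of $E$ and will restrict to an isomorphism $E\to E_B$ between the two projectivised normal spaces at $0$ and $x$.

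By Proposition \ref{p:stronger-dom}, $B'$ is algebraically dominable at every point of $E$. Given any $b\in E_B$, take $b'\in E$ with $\tilde f(b')=b$ and a regular map $h\colon\C^n\to B'$ dominating $B'$ at $b'$; then $\tilde f\circ h$ sends $0$ to $b$ and is a local isomorphism at $0$, so $B$ is dominable at $b$. The remaining two assertions of the theorem then follow by varying $x$. The step I expect to require the most care is the verification that $\tilde f$ is a local isomorphism along the whole of $E$, not merely on the complement of it. This rests on the compatibility of blow-ups with \'etale base change, applied to an \'etale neighbourhood of $0$ in $\C^n$ on which $f$ is \'etale into $X$; the base-change identification over that neighbourhood forces $\tilde f$ to be a local isomorphism throughout a Zariski open neighbourhood of $E$ in $B'$.
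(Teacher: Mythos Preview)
Your argument is correct and follows essentially the same route as the paper: pull back the blow-up along the dominating map $f\colon\C^n\to X$, lift $f$ to a regular map between the blow-ups, observe that the lift is a local isomorphism over $0$, and then invoke Proposition~\ref{p:stronger-dom}. Your justification via \'etale base change for why $\tilde f$ is a local isomorphism along $E$ is more explicit than the paper's, which simply asserts that the lift is a local isomorphism at the chosen point.
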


Let us mention the related result that if $X$ is uniformly rational (meaning that $X$ is covered by open sets isomorphic to open subsets of affine space), then so is $B$ (\cite[\S 3.5.E]{Gromov1989}, \cite[Proposition 2.6]{BB2014}).

In the next section we prove Theorem \ref{t:main}.  In the final section we prove Corollary \ref{c:many-maps}, Proposition \ref{p:quillen-suslin}, Proposition \ref{p:stronger-dom}, and Theorem \ref{t:general-strong-dom}.

\section{Proof of Theorem \ref{t:main}}
\label{sec:proof-of-main-theorem}

\subsection{}
This section is devoted to the proof of our main result, Theorem \ref{t:main}.  We start by proving the theorem in case $S=\varnothing$.  Let $B$ be the blow-up of $\C^n$, $n\geq 2$, along an algebraic submanifold $A$ of $\C^n$ (not necessarily connected) with exceptional divisor $E\subset B$.  Write $\pi$ for the projection $B\to\C^n$.  Without loss of generality we may assume that each component of $A$ has codimension at least $2$.  We will show that $B$ is algebraically subelliptic.  By Gromov's localisation principle, it suffices to show that $B$ can be covered by Zariski-open sets $U$ carrying regular sprays $\C^s\times U\to B$ that together dominate at each point $b$ of $B$.  Now $B\setminus E$ is isomorphic to $\C^n\setminus A$, which, as shown by Gromov (\cite[\S 0.5.B(iii)]{Gromov1989}, \cite[Proposition 5.5.14]{Forstneric2011}), is algebraically elliptic (with some high value of $s$).  Thus we take $b\in E$.  The sprays constructed below all have $s=1$.

Let $a=\pi(b)\in A$.  We may take $a$ to be the origin in $\C^n$.  Viewing $E$ as the projectivised normal bundle of $A$, we can represent $b$ by a vector $v\in T_a\C^n\setminus T_a A$.  The kernel of the tangent map $d_b\pi:T_b B\to T_a \C^n$ is the subspace $T_b\pi^{-1}(a)$ of dimension $\codim_a A-1$.  The image of $d_b\pi$ is $\C v\oplus T_a A$.  We first construct sprays that span the kernel.  Then we give a different construction of sprays that span some vector (that we have not tried to pin down) over a generic vector in the image.  This suffices to prove the theorem.

Let $r=\codim_a A\geq 2$.  After a linear change of coordinates, $T_a A\subset T_a\C^n\cong\C^n$ is given by the equations $x_1,\ldots,x_r=0$.  Then, in a Zariski neighbourhood $U$ of $a$ in $\C^n$, $A$ is the common zero locus of polynomials $u_1,\ldots,u_r$ with $u_j(x)=x_j+\textrm{higher order terms}$.  We can take $\C^n\setminus U$ to consist of the components of $A$ other than the component $A_0$ containing $a$ (call their union $A_1$) and of the common zeros of $u_1,\ldots,u_r$ other than $A_0$.  By removing from $U$ a subvariety of $A_0$ not containing $a$, we may assume that $d_x u_1,\ldots, d_x u_r$ are linearly independent for all $x\in A\cap U$.  We view $\pi^{-1}(U)\subset B$ as the closure in $U\times\P^{r-1}$ of the set 
\[ \{(x,\lambda)\in (U\setminus A)\times\P^{r-1}: \lambda=[u_1(x),\ldots,u_r(x)]\}. \]
In other words, $\pi^{-1}(U)$ is the graph of the rational map $[u_1,\ldots,u_r]:U\to\P^{r-1}$.  The map $\pi$ is the projection onto the first factor.  Note that $\pi^{-1}(U)$ is covered by $r$ affine Zariski-open sets of the same form, one of which is
\[ Y = \{ (x,\lambda)\in U\times\C^{r-1}:u_j(x)=\lambda_j u_r(x), j=1,\ldots,r-1\}. \]
Note also that $u_r\circ\pi$ is a defining function for $E\cap Y$ as a submanifold of $Y$.  We may assume that $b\in Y$.  Let $\tilde B$ be the graph of the rational map $[u_1,\ldots,u_r]:\C^n\to\P^{r-1}$ and $\tilde\pi:\tilde B\to\C^n$ be the projection.  The projection $\tilde\pi^{-1}(\C^n\setminus A_1)\to\pi^{-1}(\C^n\setminus A_1)$ is an isomorphism over $U$.

\subsection{}
To produce the first type of spray, we make use of the complete regular flows on $\C^n$ fixing $A$ pointwise, and therefore restricting to complete flows on $\C^n\setminus A$, that appear in Gromov's proof that $\C^n\setminus A$ is algebraically elliptic.  Define
\[ \phi:\C\times\C^n\to\C^n, \quad \phi(t,x)=x+th(\tau(x))\zeta, \]
where $\tau:\C^n\to\C^{n-1}$ is a surjective linear projection such that $\tau\vert A$ is proper, $\zeta\neq 0$ is in the kernel of $\tau$, and $h:\C^{n-1}\to\C$ is a polynomial which vanishes on the subvariety $\tau(A)$.  For a generic choice of $h$, $\tau$, $\zeta$, and $\xi\in T_b B$, we have:
\begin{itemize}
\item  $\eta=d_b\pi(\xi)\notin T_a A$.
\item  $\zeta\notin \C\eta+T_a A$.
\item  $d_b u_r(\eta)\neq 0$.
\item  $(d_{\tau(a)}h\circ d_a\tau)(\eta)\neq 0$.
\end{itemize}
Extend $\xi$ to a vector field (with the same name) on a small enough neighbourhood of $b$ in $E\cap Y$ that the above properties hold with $b$ replaced by a nearby $y\in E\cap Y$ and $a$ replaced by $\pi(y)$.

Define a regular map $f:\C\times Y\to\C^n\setminus A_1$ by the formula
\[ f(t,y)=\phi(t,\pi(y))=th(\tau(x))\zeta+x. \]
If $y=(x,\lambda)\in E\cap Y$, then $u_j(f(t,y))=u_j(x)=0$, so there are regular functions $\lambda_1,\ldots,\lambda_r$ on $\C\times Y$ such that $u_j(f(t,y))=u_r(x)\lambda_j(t,y)$ for $j=1,\ldots,r$ and $(t,y)\in\C\times Y$.  The map $f$ lifts to a rational map $F:\C\times Y\to\tilde\pi^{-1}(\C^n\setminus A_1)\subset\tilde B$ with
\[ F(t,y)= (f(t,y), [\lambda_1(t,y),\ldots,\lambda_r(t,y)]). \]
We claim that $F$ is regular on $\C\times V$ for some Zariski neighbourhood $V\subset Y$ of $b$.  

First, it is clear that $F$ is regular on $\C\times(Y\setminus E)$.  Next, for $F$ to be regular on $\C\times\{b\}$, we require $(\lambda_1(t,b),\ldots,\lambda_r(t,b))\neq (0,\ldots,0)$ for all $t\in\C$.  Differentiating the identity $u_j(f(t,y))=u_r(x)\lambda_j(t,y)$ with respect to $y$ at $(t,b)$ and evaluating the tangent maps at $\xi$ gives
\begin{equation}  \label{eq:derivatives}
d_a u_j\big(t(d_{\tau(a)}h\circ d_a\tau \circ d_b\pi)(\xi)\zeta+d_b\pi(\xi)\big) = \lambda_j(t,b) d_b(u_r\circ\pi)(\xi).
\end{equation}
The common kernel of $d_a u_1,\ldots,d_a u_r$ is $T_a A$, so our requirement is met if 
\[ t(d_{\tau(a)}h\circ d_a\tau \circ d_b\pi)(\xi)\zeta+\eta\notin T_a A \] 
for all $t\in\C$.  This holds since $\zeta\notin \C\eta+T_a A$ and $\eta\notin T_a A$.  Finally, we show that $F$ is regular on $\C\times\{y\}$ for $y\in E\cap Y$ sufficiently close to $b$.  Otherwise, there is a sequence $((t_\nu, y_\nu))$ with $y_\nu\in E\cap Y$, $y_\nu\to b$, and $\lambda_j(t_\nu, y_\nu)=0$ for $j=1,\ldots,r$.  We may assume that $t_\nu\to\infty$, for otherwise the inequality $(\lambda_1(t,b),\ldots,\lambda_r(t,b))\neq (0,\ldots,0)$ for all $t\in\C$ is contradicted.  Now (\ref{eq:derivatives}) holds with $b$ replaced by $y_\nu$ and $a$ by $\pi(y_\nu)\in A$, and $t=t_\nu$.  Letting $\nu\to\infty$, we conclude that $d_a u_j\big((d_{\tau(a)}h\circ d_a\tau)(\eta)\zeta\big)=0$ for $j=1,\ldots,r$, that is, $(d_{\tau(a)}h\circ d_a\tau)(\eta)\zeta\in T_a A$, which is ruled out by the generic choices made above.

Thus, postcomposing $F$ with the projection onto $\pi^{-1}(\C^n\setminus A_1)$, which is an isomorphism over $U$, yields a regular spray $G$ on $V\subset\pi^{-1}(U)$ with values in $\pi^{-1}(\C^n\setminus A_1)\subset B$.  Now $\dfrac{\partial f}{\partial t}(0,b)=0$, so $\dfrac{\partial G}{\partial t}(0,b)$ must lie in $\ker d_b\pi=T_b \pi^{-1}(a)$.  Differentiating (\ref{eq:derivatives}) with respect to $t$ at $(0,b)$ gives
\[ \dfrac{\partial \lambda_j}{\partial t}(0,b) d_a u_r(\eta) = (d_{\tau(a)}h\circ d_a\tau)(\eta) d_a u_j(\zeta). \]
By the choice of $u_1,\ldots,u_r$, $d_a u_j(\zeta)=\zeta_j$.  Hence the derivative at $0$ of the lifting $\C\to\C^r\setminus\{0\}$, $t\mapsto (\lambda_1(t,b),\ldots,\lambda_r(t,b))$, is
\[ \frac{(d_{\tau(a)}h\circ d_a\tau)(\eta)}{d_a u_r(\eta)} (\zeta_1,\ldots,\zeta_r). \]
This shows that we can produce $r-1$ sprays that span all of $T_b \pi^{-1}(a)$.

\subsection{}
We now turn to a different construction of sprays that span some vector over a generic vector in the image $\C v\oplus T_a A$ of $d_b\pi$.  

It is well known that every algebraic subvariety of $\C^n$ is a rational hypersurface retract.  Here, we restrict a linear projection $L:\C^n\to \C^{n-r+1}$ to $A_0$ and let $W=L^{-1}(L(A_0))$.  (Recall that $r=\codim_a A$.)  For generic $L$, the regular map $A_0\to L(A_0)$ is biregular at $a$, the hypersurface $W$ in $\C^n$ is smooth at $a$, and we have a rational retraction $W\to L(A_0)\to A_0$.  Thus, possibly after shrinking $U$, there is a hypersurface $W$ in $\C^n$ containing $A_0$ and smooth at $a$, with a regular retraction $\rho:W\cap U\to A\cap U$.  We may assume that any one of the polynomials $u_1,\ldots,u_r$, say $u_r$, is a defining function for $W$.  Let $V$ be the hypersurface $(W\cap U)\times\C^{r-1}$ in $U\times\C^{r-1}$.

Now $V$ is defined by the equation $u_r=0$ and $Y$ is defined by the equations $u_j=\lambda_j u_r$, $j=1,\ldots,r-1$.  Thus $V\cap Y=E\cap Y$.  Since $d_x u_1,\ldots, d_x u_r$ are linearly independent for all $x\in A\cap U$, we see that $V$ and $Y$ intersect transversely over $A\cap U$.

It is well known that the Zariski topology of a smooth algebraic variety has a basis consisting of open sets that are isomorphic to closed affine hypersurfaces (\cite[Theorem 5.7]{BMS1989}, \cite[Theorem 2.5]{Jelonek2000}).  We need a variant of this fact.

\noindent
\textbf{Claim.}  There is a Zariski neighbourhood $Z$ of $b$ in $U\times\C^{r-1}$ and a regular embedding $\gamma$ of $(V\cup Y)\cap Z$ as a closed subvariety of $\C^m$, $m=n+r-1$.

We take the claim for granted for now and prove it in the next subsection.  Write $V'=V\cap Z$ and $Y'=Y\cap Z$.  Because $\gamma(V')$ and $\gamma(Y')$ intersect transversely, the well-defined map $\gamma(V'\cup Y')\to\C^n$ defined on $\gamma(V')$ as $\rho\circ\pi\circ\gamma^{-1}$, and on $\gamma(Y')$ as $\pi\circ\gamma^{-1}$, is regular.  We extend this map to a regular map $\phi:\C^m\to\C^n$.  Then $\gamma(E\cap Y')\subset \gamma(V') \subset\phi^{-1}(A)$.

Let $I(A)$ be the defining ideal of $A$.  Next we show that $\phi^*I(A)$ is principal near $\gamma(b)$.  Let $p$ be a defining polynomial for $\gamma(V')$.  Then there are polynomials $q_1,\ldots,q_r$ such that
\[ u_j\circ\phi = p\, q_j, \qquad j=1,\ldots,r. \]
It suffices to show that $\gamma(E\cap Y')\cap\{q_1,\ldots,q_r=0\}$ is empty (so $\phi^{-1}(A)=\gamma(V')$ near $\gamma(E\cap Y')$).  For this, it is enough to find a tangent vector $w\in T_{\gamma(b)}\C^m$ such that 
\[ q_j(\gamma(b))d_{\gamma(b)} p(w)+p(\gamma(b)) d_{\gamma(b)} q_j(w) = d_{\gamma(b)}(u_j\circ\phi)(w)\neq 0 \]
for some $j\in\{1,\ldots,r\}$, since then $q_j(\gamma(b))\neq 0$.  Thus we need $d_{\gamma(b)}\phi(w)\notin T_a A$.  Now $d_b\pi(T_b Y)$ is larger than $T_a A$, so there is $w\in T_{\gamma(b)} \gamma(Y')$ with $d_{\gamma(b)}(\pi\circ\gamma^{-1})(w)\notin T_a A$.  Since $\phi=\pi\circ\gamma^{-1}$ on $\gamma(Y')$, we have $d_{\gamma(b)}\phi(w)=d_{\gamma(b)}(\pi\circ\gamma^{-1})(w)$.

Take $\zeta$ in $T_{\gamma(b)}\C^m$ (identified with $\C^m$ itself) and define a regular map
\[ f:\C\times Y'\to\C^n, \quad f(t,y)=\phi(\gamma(y)+t\zeta), \]
with $f(0,\cdot)=\pi$ on $Y'$.  Since $f^* I(A)$ is principal near $(0,b)$, the rational lifting $F:\C\times Y'\to B$ of $f$ is regular near $(0,b)$.  In fact, for generic $\zeta\in\C^m$, $F$ is regular on the product of $\C$ and some Zariski neighbourhood of $b$ in $Y'$.  Namely, let $Q$ be the subvariety of $\C^m$ where $\phi^*I(A)$ is not principal.  We need the line $\gamma(b)+\C\zeta$ to avoid $Q$, also at infinity in $\P^m$.  Since $\codim Q\geq 2$, this holds for generic $\zeta$.

Now $\dfrac{\partial f}{\partial t}(0,b)=d_{\gamma(b)}\phi(\zeta)$.  Since $d_{\gamma(b)}\phi(T_{\gamma(b)}\gamma(V'))=T_a A$, we have 
\[ d_{\gamma(b)}\phi(T_{\gamma(b)}\C^m)=d_b\pi(T_b Y). \]
Hence we obtain local sprays $F$ such that $\dfrac{\partial F}{\partial t}(0,b)$ lies over a generic vector in $d_b\pi(T_b Y)$, as desired.

\subsection{}
We conclude the proof of Theorem \ref{t:main} in case $S=\varnothing$ by proving the claim.  Our argument is based on Jelonek's proof of \cite[Theorem 2.5]{Jelonek2000}.

Let $\overline V=W\times\C^{r-1}$ and $\overline Y$ be the closure of $V$ and $Y$ in $\C^m$, respectively.  Then $R=(\overline V\cup\overline Y)\setminus U$ is a subvariety of codimension at least $2$ in $\C^m$.  Let $T$ be the union of $\overline V$ and a hypersurface containing $\overline Y$.  Then $T$ is a hypersurface in $\C^m$ with $b\in T$.  We will show that $b$ has a Zariski neighbourhood $Z$ in $\C^m$, disjoint from $R$, such that $T\cap Z$ embeds as a closed subvariety of $\C^m$.

After a generic change of coordinates of the form $x_j\mapsto x_j+a_j x_m$, $j=1,\ldots,m-1$, $x_m\mapsto x_m$, $T$ has a defining polynomial of the form
\[ x_m^k+\sum_{j=0}^{k-1}a_j(x_1,\ldots,x_{m-1})x_m^j=0. \]
Let $p:\C^m\to\C^{m-1}$ be the projection $(x_1,\ldots,x_m)\mapsto (x_1,\ldots,x_{m-1})$.  Then $p(R)$ is contained in a hypersurface in $\C^{m-1}$ defined by a polynomial $h$.  Let $H=\{x\in\C^m:x_m=0\}$ and $N=\{x\in\C^m:h(x_1,\ldots,x_{m-1})=0\}$.  We may assume that $0\notin T\cup N$ and $b\notin H\cup N$.  Let $R'=T\cap(H\cup N)$.  Then $R\subset R'$ and $Z=\C^m\setminus R'$ is a Zariski neighbourhood of $b$.  Define
\[ F:\C^m\to\C^m, \quad (x_1,\ldots,x_m)\mapsto (x_1,\ldots,x_{m-1},h(x_1,\ldots,x_{m-1})x_m). \]
Clearly, $F$ restricts to an automorphism of $\C^m\setminus N$.  Using the form of the defining polynomial of $T$, it is easy to show that
\[ \overline{F(T)} \cap N\subset H\cap N. \]
It follows that $\overline{F(T)}\setminus H = F(T)\setminus H$.  Since $F(N)\subset H$, we have 
\[ F(T)\setminus H=F(T\setminus N)\setminus H\subset \C^m\setminus N. \]
Hence $\overline{F(T)}\setminus H$ is isomorphic to 
\[ F^{-1}(F(T\setminus N)\setminus H)=T\setminus(H\cup N)=T\cap Z. \]
Now define
\[ \sigma:\C^m\to\C^m, \quad (x_1,\ldots,x_m)\mapsto (x_1 x_m,\ldots,x_{m-1}x_m,x_m). \]
Then $\sigma$ is an automorphism of $\C^m\setminus H$ and $\sigma^{-1}(H)=\sigma^{-1}(0)=H$.  Since $0\notin T\cup N$, we have $0\notin \overline{F(T)}$, so
\[ \sigma^{-1}(\overline{F(T)}) = \sigma^{-1}(\overline{F(T)}\setminus\{0\})=\sigma^{-1}(\overline{F(T)})\setminus H = \sigma^{-1}(\overline{F(T)}\setminus H).\]
We conclude that $T\cap Z$ is isomorphic to the closed subvariety $\sigma^{-1}(\overline{F(T)})$ of $\C^m$.

\subsection{}
Now let $S$ be an algebraic subvariety of $\C^n$, $n \geq 2$, of codimension at least $2$, and $A$ be an algebraic submanifold of $\C^n\setminus S$.  Let $B$ be the blow-up of $\C^n\setminus S$ along $A$.  We indicate how the proof above can be modified so as to show that $B$ is algebraically subelliptic.

We include $S$ in $\C^n\setminus U$.  In the definition of the map $\phi$ in the construction of the first type of spray, we replace $A$ by the union of $S$ and the closure of $A$ in $\C^n$.  The map $f$ then takes values in $\C^n\setminus (A_1\cup S)$ and the construction goes through.

In the definition of the map $f$ in the construction of the second type of spray, we replace $\gamma(y)+t\zeta$ by a flow that avoids $\phi^{-1}(S)$.  To obtain such a flow we need $\codim \phi^{-1}(S)\geq 2$, which must be built into the construction of $\phi$ as an extension.  To this end we use the following corollary of a theorem of Jelonek.

\begin{proposition}  \label{p:Jelonek-extension}
Let $m\geq n$, $X$ be an algebraic subvariety of $\C^m$, and $f:X\to\C^n$ be a polynomial map.  Then there is a polynomial map $F:\C^m\to\C^n$ extending $f$ such that $\dim F^{-1}(z)\setminus X\leq m-n$ for all $z\in\C^n$.
\end{proposition}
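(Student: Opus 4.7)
The plan is to reduce Proposition \ref{p:Jelonek-extension} to a polynomial extension theorem of Jelonek, whose main content is the uniform control of fiber dimension over the whole of $\C^n$. Begin by extending $f$ to an arbitrary polynomial map $F_0 : \C^m \to \C^n$; this is possible because the restriction map $\C[x_1, \ldots, x_m] \to \C[X]$ is surjective, so each component of $f$ lifts to a polynomial on $\C^m$. Every polynomial extension of $f$ is then of the form $F = F_0 + G$ with $G : \C^m \to \C^n$ having components in the ideal $I(X)$, so the problem reduces to choosing $G$ so that the fibers of $F$ over $\C^n$ meet $\C^m \setminus X$ in sets of dimension at most $m - n$.

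I would exploit the fact that for a perturbation of the form $G = gH$, with $g$ a fixed non-zero polynomial vanishing on $X$ and $H : \C^m \to \C^n$ a polynomial map of sufficiently large degree $d$, a generic choice of $H$ yields the required property. Parametrise such $H$ by a finite-dimensional affine space $\mathbb{H}_d$ and consider the universal incidence variety
\[
\mathcal{I} = \{(x, H, z) \in (\C^m \setminus X) \times \mathbb{H}_d \times \C^n : F_0(x) + g(x) H(x) = z\}.
\]
Since $g(x) \neq 0$ for $x \notin X$, the evaluation $H \mapsto F_0(x) + g(x) H(x)$ is surjective onto $\C^n$ for every such $x$, hence $\dim \mathcal{I} = m + \dim \mathbb{H}_d$. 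The generic fiber of the projection $\mathcal{I} \to \mathbb{H}_d \times \C^n$ therefore has dimension $m - n$, and it remains to show that the locus in $\mathbb{H}_d \times \C^n$ where this fiber jumps strictly above $m - n$ projects to a \emph{proper} constructible subset of $\mathbb{H}_d$.

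The main obstacle is precisely this uniformity in $z$: a naive dimension count on $\mathcal{I}$ bounds the bad locus only as a subvariety of $\mathbb{H}_d \times \C^n$ of dimension at most $\dim \mathbb{H}_d + n - 1$, which does not suffice to conclude that its projection to $\mathbb{H}_d$ misses a point. Jelonek's theorem supplies the extra input: by a careful analysis combining Chevalley's semi-continuity of fiber dimension with the structure of the ideal $I(X)$ and of the bad locus, one shows that for generic $H \in \mathbb{H}_d$ the extension $F = F_0 + gH$ satisfies $\dim F^{-1}(z) \setminus X \leq m - n$ for every $z \in \C^n$ simultaneously. Proposition \ref{p:Jelonek-extension} then follows at once.
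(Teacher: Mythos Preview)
Your proposal identifies the right difficulty but does not resolve it.  The entire content of the proposition is the uniformity in $z$, and at exactly that point you write that ``Jelonek's theorem supplies the extra input'' without saying which statement you mean or how it is applied.  A naive Bertini/incidence count, as you yourself note, only bounds the bad locus inside $\mathbb{H}_d\times\C^n$ and says nothing about its projection to $\mathbb{H}_d$; getting from there to a single $H$ that works for all $z$ simultaneously is the whole problem, and you have deferred it rather than solved it.  There is also a smaller slip: with $G=gH$ for a single polynomial $g\in I(X)$, the claim ``$g(x)\neq 0$ for $x\notin X$'' is generally false unless $X$ is a hypersurface, so the surjectivity of $H\mapsto F_0(x)+g(x)H(x)$ fails on the zero set of $g$ outside $X$.

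The paper avoids the genericity argument entirely by a one-line reduction to the equidimensional case, which \emph{is} Jelonek's theorem \cite[Theorem~3.9]{Jelonek1997}: given $f:X\to\C^n$, embed $\C^n$ as $\C^n\times\{0\}\subset\C^m$, view $f$ as a map $\tilde f:X\to\C^m$, and apply Jelonek's result to get an extension $\tilde F:\C^m\to\C^m$ whose fibres on $\C^m\setminus X$ are \emph{finite}.  Post-composing with the linear projection $\pi:\C^m\to\C^n$ gives $F=\pi\circ\tilde F$, and since $\pi$ has $(m-n)$-dimensional fibres, every fibre of $F$ meets $\C^m\setminus X$ in a set of dimension at most $m-n$.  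So the hard uniformity statement is used as a black box in the equidimensional case rather than reproved; the passage from $n=m$ to $n\le m$ is elementary.
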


\begin{proof}
Embed $\C^n$ as $\C^n\times\{0\}$ in $\C^m$.  Then $f$ induces a map $\tilde f:X\to\C^m$, which extends to a polynomial map $\tilde F:\C^m\to\C^m$ such that $\tilde F\vert\C^m\setminus X$ has finite fibres \cite[Theorem 3.9]{Jelonek1997}.  Let $\pi:\C^m\to\C^n$, $(z_1,\ldots,z_m)\mapsto (z_1,\ldots,z_n)$.  Then $F=\pi\circ\tilde F$ is the desired map.
\end{proof}

\section{Other Proofs}
\label{sec:other-proofs}

\begin{proof}[Proof of Corollary \ref{c:many-maps}]
Let $\pi:B\to\C^n$ be the blow-up along $A$ and let $f:T\to\C^n$ be holomorphic.  First note that $f$ factors through $\pi$ by a holomorphic map $g:T\to B$.  This is clear if $T$ is discrete, so suppose that $T$ is a smooth analytic curve.  If $f(T)\not\subset A$, then the preimage of $A$ by $f$, as a complex subspace of $T$, is locally principal since $\dim T=1$, so by the universal property of the blow-up, $f$ factors through $\pi$.  If $f(T)\subset A$, we use the geometric construction of the blow-up.  The pullback by $f$ of the normal bundle of $A$ in $\C^n$ is holomorphically trivial, again since $\dim T=1$, and a nowhere-vanishing section of the pullback bundle over $T$ defines $g$.

Next we need an extension of $g:T\to B$ to a continuous map $\C^m\to B$.  If $T$ is discrete, this is elementary.  For example, take an injection $g_1:T\to\mathbb R$ and a continuous map $g_2:\mathbb R\to B$ such that $g=g_2\circ g_1$, and extend $g_1$ to a continuous map $\C^m\to\mathbb R$.  If $T$ is a smooth analytic curve, since $B$ is simply connected and $T$ is homotopy equivalent to a disjoint union of bouquets of circles, $g$ is homotopic to a continuous map $\tilde g:T\to B$ with a countable image.  It is easy to see that $\tilde g$ extends continuously to $\C^m$ (for example by factoring $\tilde g$ through $\mathbb R$ as above), so $g$ does as well.

Since $B$ is Oka, $g$ has a holomorphic extension $h:\C^m\to B$.  Let $F=\pi\circ h:\C^m\to \C^n$.  Then $F$ is a holomorphic extension of $f$ and $F^{-1}(A)=h^{-1}(\pi^{-1}(A))$ is a hypersurface -- except that $F^{-1}(A)$ might be empty or all of $\C^m$.  To avert the former, add an extra point or component to $T$ and let $f$ map it into $A$.  To avert the latter, add an extra point or component to $T$ and let $f$ map it outside of $A$.
\end{proof}

\begin{proof}[Proof of Proposition \ref{p:quillen-suslin}]
We refer to \cite[Section 6.3]{Forstneric2011} for Gromov's theory of composed sprays.  Let $X$ be an algebraic manifold with a dominating family of algebraic sprays $(E_j, \pi_j, s_j)$, $j=1,\ldots,m\geq 2$ (if $m=1$, there is nothing to prove).  The composed spray $(E_1\ast E_2, \pi_1\ast\pi_2, s_1\ast s_2)$ is defined as the pullback
\[ E_1\ast E_2 = \{ (e_1,e_2)\in E_1\times E_2:s_1(e_1)=\pi_2(e_2)\} \]
with
\[ \pi_1\ast\pi_2(e_1,e_2)=\pi_1(e_1), \quad s_1\ast s_2(e_1,e_2)=s_2(e_2). \]
Then $E_1\ast E_2$ is a vector bundle over $E_1$, and it has a natural zero-section over $X$, but we do not know whether it is a vector bundle, even holomorphically, over $X$.  Otherwise it is a spray over $X$ in the usual sense.  With that same proviso, we have a composed spray bundle $E=(\cdots(E_1\ast E_2)\ast\cdots)\ast E_m$, which is dominating over $X$.  Now $E$ is a vector bundle over a vector bundle over \ldots a vector bundle over $X$, so each fibre of $E$ is a vector bundle over a vector bundle over \ldots an affine space.  (Up to this point, the theory of composed sprays is the same in the algebraic category and the holomorphic category.)  We now invoke the Quillen-Suslin theorem, which states that every algebraic vector bundle over an affine space is algebraically trivial, and conclude that each fibre of $E$ is isomorphic to an affine space, which implies that $X$ is strongly algebraically dominable.
\end{proof}

\begin{proof}[Proof of Proposition \ref{p:stronger-dom}]
Let $A$ be a closed subscheme of $\C^n$, $n\geq 2$.  The defining ideal of $A$ is generated by polynomials $h_1,\ldots,h_m$ with greatest common divisor $h$.  The blow-up of $\C^n$ along $A$ is the same as the blow-up of $\C^n$ along the subscheme defined by the ideal generated by $h_1/h,\ldots,h_m/h$.  Thus we may assume that $A$ has codimension at least $2$.  In particular, the singular locus $Z$ of $A$ has codimension at least $2$.  By Theorem \ref{t:main}, the blow-up of $\C^n\setminus Z$ along $A\setminus Z$ is algebraically subelliptic and hence strongly algebraically dominable by Proposition \ref{p:quillen-suslin}.
\end{proof}

\begin{proof}[Proof of Theorem \ref{t:general-strong-dom}]
Let $B$ be the blow-up of an algebraic manifold $X$ along an algebraic submanifold $A$.  Suppose that $X$ is algebraically dominable at a point $x$ and let $y\in B$ lie over $x$.  Let $f:\C^n\to X$ be a regular map that takes $0$ to $x$ and is a local isomorphism at $0$.  Let $\widehat\C^n$ be the blow-up of $\C^n$ along the subscheme $f^*A$.  Then $0$ is not a singular point of $f^*A$.  Denote the blow-up projections by $\pi:B\to X$ and $p:\widehat\C^n\to\C^n$.  Let $F:\widehat\C^n\to B$ be the regular lifting of $f\circ p$ by $\pi$, taking a point $z$ over $0$ to $y$.  Then $F$ is a local isomorphism at $z$, so it suffices to show that $\widehat\C^n$ is dominable at $z$, but this follows from Proposition~\ref{p:stronger-dom}.
\end{proof}


\begin{thebibliography}{88}

\bibitem{APS2014}
I. Arzhantsev, A. Perepechko, H. S\"u\ss.  \textit{Infinite transitivity on universal torsors.}  J. London Math. Soc. (2) \textbf{89} (2014) 762--778.

\bibitem{BMS1989}
S. Bloch, M. P. Murthy, L. Szpiro.  \textit{Zero cycles and the number of generators of an ideal.}  Colloque en l'honneur de Pierre Samuel (Orsay, 1987).  M\'em. Soc. Math. France (N.S.) No. 38 (1989) 51--74. 

\bibitem{BB2014}
F. Bogomolov, C. B\"ohning.  \textit{On uniformly rational varieties.}  Topology, geometry, integrable systems, and mathematical physics, 33--48, 
Amer. Math. Soc. Transl. Ser. 2, 234.  Amer. Math. Soc., 2014. 

\bibitem{Borelli1963}
M. Borelli.  \textit{Divisorial varieties.}  Pacific J. Math. \textbf{13} (1963) 375--388.

\bibitem{Forster1970}
O. Forster.  \textit{Plongements des vari\'et\'es de Stein.} Comment. Math. Helv. \textbf{45} (1970) 170--184.

\bibitem{Forstneric2011}
F. Forstneri\v c.  \textit{Stein manifolds and holomorphic mappings.  The homotopy principle in complex analysis.}  Ergebnisse der Mathematik und ihrer Grenzgebiete, 3.\ Folge, 56.  Springer-Verlag, 2011.

\bibitem{Forstneric2013}
F. Forstneri\v c.  \textit{Oka manifolds: from Oka to Stein and back.}  With an appendix by F.\ L\'arusson.  Ann. Fac. Sci. Toulouse Math. (6) \textbf{22} (2013) 747--809.

\bibitem{FL2011} 
F. Forstneri\v c, F. L\'arusson.  \textit{Survey of Oka theory.}  New York J. Math. \textbf{17a} (2011) 1--28.

\bibitem{FL2014} 
F. Forstneri\v c, F. L\'arusson.  \textit{Holomorphic flexibility properties of compact complex surfaces.}  Int. Math. Res. Not. 2014, no. 13, 3714--3734. 

\bibitem{Gromov1989}
M. Gromov.  \textit{Oka's principle for holomorphic sections of elliptic bundles.}  J. Amer. Math. Soc. \textbf{2} (1989) 851--897.

\bibitem{Hanysz2014}
A. Hanysz.  \textit{Oka properties of some hypersurface complements.}
Proc. Amer. Math. Soc. \textbf{142} (2014) 483--496.

\bibitem{Jelonek1997}
Z. Jelonek.  \textit{A hypersurface which has the Abhyankar-Moh property.}  Math. Ann. \textbf{308} (1997) 73--84.

\bibitem{Jelonek2000}
Z. Jelonek.  \textit{Local characterization of algebraic manifolds and characterization of components of the set $S_f$.}  Ann. Polon. Math. \textbf{75} (2000) 7--13.

\end{thebibliography}
\end{document}